\documentclass[a4paper, 12pt]{amsart}
\usepackage{amsmath,amssymb,amsthm}
\usepackage{amscd}
\setlength{\oddsidemargin}{30mm}
\addtolength{\oddsidemargin}{-1in}
\setlength{\evensidemargin}{30mm}
\addtolength{\evensidemargin}{-1in}
\setlength{\textwidth}{150mm}
\usepackage{array,enumerate}

\newtheorem{Thm}{Theorem}[section]

\newtheorem{Thmint}{Theorem}[section]

\theoremstyle{definition}
\newtheorem{Rem}[Thm]{Remark}
\newtheorem*{Remint}{Remark}

\newcommand{\Cs}{C$^\ast$}

\newcommand{\sd}{^{\ast\ast}}

\newcommand{\id}{\mbox{\rm id}}

\newcommand{\rc}{\mathop{\rtimes _{\mathrm r}}}
\newcommand{\votimes}{\mathop{\bar{\otimes}}}

\newcommand{\rca}[1]{\mathop{\rtimes _{{\mathrm r}, #1}}}

\newcommand{\IC}{\mathbb C}
\newcommand{\IF}{\mathbb F}
\newcommand{\IK}{\mathbb K}

\newcommand{\IN}{\mathbb N}
\newcommand{\IR}{\mathbb R}
\newcommand{\IT}{\mathbb T}
\newcommand{\IZ}{\mathbb Z}

\newcommand{\cM}{\mathcal M}

\newcommand{\acts}{\curvearrowright}

\newcommand{\ad}{\mathrm{Ad}}

\DeclareMathOperator{\supp}{supp}

\DeclareMathOperator{\Ped}{Ped}

\DeclareMathOperator{\bigfp}{\lower0.25ex\hbox{\LARGE $\ast$}}

\setcounter{tocdepth}{1}
\title[Amenable actions on stably finite simple \Cs-algebras]
{Every countable group admits amenable actions on stably finite simple \Cs-algebras}
\author{Yuhei Suzuki}
\subjclass[2020]{Primary~
46L55, Secondary~46L35}
\keywords{Non-commutative amenable actions, stably finite \Cs-algebras}
\address{Department of Mathematics, Faculty of Science, Hokkaido University,
Kita 10, Nishi 8, Kita-Ku, Sapporo, Hokkaido, 060-0810, Japan}

\dedicatory{Dedicated to Professor Yasuyuki Kawahigashi
on the occasion of his 60th birthday.}
\email{yuhei@math.sci.hokudai.ac.jp}
\begin{document}
\maketitle
\begin{abstract}
We give the first examples of (non-amenable group) amenable actions on
stably finite simple \Cs-algebras.
More precisely, we give such actions for any countable group in an explicit way.
The main ingredients of our construction
are the full Fock space of the regular representation and a trace-scaling automorphism.
\end{abstract}
\section{Introduction}
Recently amenability of \Cs-dynamical systems
attracted much attention.
This is a consequence of the discovery of
amenable \Cs-dynamical systems on \emph{simple} \Cs-algebras \cite{Suzeq}.
The formulation and characterizations of amenability of (non-commutative) \Cs-dynamical systems
had been unclear for a long time,
but this problem was recently settled in \cite{OS} (see also \cite{BC}, \cite{BEW2}, \cite{Suz21}).
Here we only recall the definition in the discrete group case.
An action $\alpha \colon \Gamma\curvearrowright A$
of a discrete group $\Gamma$
on a \Cs-algebra $A$ is \emph{amenable} \cite{AD}
if and only if there exists a $\Gamma$-equivariant conditional expectation
$\ell^\infty(\Gamma) \votimes A\sd \rightarrow 1 \otimes A\sd$
when $\ell^\infty(\Gamma) \votimes A\sd$ is equipped with
the diagonal $\Gamma$-action of the left translation action and $\alpha\sd$.
For useful characterizations and basic properties of
amenability of \Cs-dynamical systems,
we refer the reader to \cite{AD}, \cite{BEW2}, \cite{OS}.
A recent highlight around this subject is
the successful classification results \cite{Suz21}, \cite{GS1}, \cite{GS2}
of certain amenable actions on \emph{purely infinite simple} \Cs-algebras.

As an application of new characterizations of amenability,
Ozawa and the author have established
a powerful method to produce amenable actions
on simple \Cs-algebras (see \cite{OS}, Theorem 6.1).
However, in all previously known constructions \cite{Suzeq}, \cite{OS} of amenable actions (of non-amenable groups)
on simple \Cs-algebras,
the resulting underlying \Cs-algebras end up being \emph{purely infinite}.
Here we recall that the class of \emph{classifiable} simple \Cs-algebras (see e.g., the recent survey \cite{Win})
splits into two classes: \emph{stably finite} \Cs-algebras and purely infinite \Cs-algebras.
As K-theoretic invariants of a \Cs-algebra have a much richer structure in the former class,
it is a natural and important question to ask
if there also exists an amenable action on a stably finite simple \Cs-algebra.
In this article we settle this question in the non-unital case.
More precisely, we show the following theorem.
\begin{Thmint}\label{Thmint:Main}
Every countable group $\Gamma$
admits an amenable action
on a stably finite simple separable nuclear \Cs-algebra.
\end{Thmint}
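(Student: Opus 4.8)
The plan is to build the algebra as a Cuntz--Pimsner-type (or inductive-limit) \Cs-algebra out of two ingredients that pull in opposite directions and must be balanced against each other. On one hand, the regular representation on the full Fock space $\cF(\ell^2(\Gamma))$ is, as in \cite{Suzeq} and \cite{OS}, the one known source of amenable actions that works for \emph{every} countable group --- crucially including non-exact ones, for which no amenable action on a commutative \Cs-algebra exists, so the mechanism producing amenability has to be genuinely noncommutative. On the other hand, making the underlying algebra stably finite requires a faithful densely defined trace, and this is where a trace-scaling automorphism enters: it is the device that converts the a priori purely infinite Cuntz-type picture into a stably finite one.

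In detail, I would fix a separable simple nuclear stably finite \Cs-algebra $D$ carrying a faithful densely defined lower semicontinuous trace $\tau$ and a trace-scaling automorphism $\theta\in\Aut(D)$, say $\tau\circ\theta=\lambda\tau$ with $\lambda>1$ (e.g.\ $D=M_{2^\infty}\otimes\cK$ with $\lambda=2$, or a stabilized Razak--Jacelon algebra). Using $\cF(\ell^2(\Gamma))$ and the second quantized regular representation, I would form a $\Gamma$-equivariant \Cs-correspondence $E$ over $D$ whose right $D$-module is modeled on the Fock module (so that the second quantized regular representation supplies a $\Gamma$-action on $E$) and whose left $D$-action is twisted by $\theta$, so that advancing a Fock level conjugates $D$ by $\theta$; then take $A$ to be the resulting Cuntz--Pimsner algebra, with the induced $\Gamma$-action $\alpha$ and the commuting dual gauge $\IT$-action $\delta$. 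By matching the trace-scaling exponent to the Fock-space combinatorics, and organizing $A$ as an inductive limit exhausting $\Gamma$, one arranges that $A$ (or a $\Gamma$-invariant subquotient of it) carries a faithful densely defined trace and is therefore stably finite. Separability and nuclearity are automatic from those of $D$ and $E$.

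To obtain a \emph{simple} such algebra while retaining the action, I would exploit that $\delta$ commutes with $\alpha$: averaging over $\IT$ yields a $\Gamma$-equivariant conditional expectation onto the gauge-fixed core $A^\delta$, and amenability of a $\Gamma$-action passes through any $\Gamma$-equivariant conditional expectation, so it is enough to establish amenability on $A$ itself. Since $D$ is simple the core $A^\delta$ is $\Gamma$-simple but need not be simple, so the final algebra would be a suitable simple stably finite subquotient or corner of $A^\delta$ singled out by the trace, or the reduced crossed product $A^\delta\rc\Gamma$ (simple by $\Gamma$-simplicity together with outerness of the action), chosen so as to remain separable, nuclear and stably finite. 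The amenability of $\alpha$ is where the Fock space does the real work: the creation operators $\{t_g\}_{g\in\Gamma}\subseteq A$ coming from $\lambda_\Gamma$ satisfy a Cuntz-type relation, and from them one assembles an approximating net in the sense of Anantharaman--Delaroche for $\alpha$, exactly as in the purely infinite constructions of \cite{OS} --- this is the point at which it must be the \emph{regular} representation.

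The hard part will be carrying out this balancing act simultaneously: ensuring that the final algebra is at once simple, stably finite (equivalently, that a faithful densely defined trace survives the trace-scaling twist and the descent to the core), separable and nuclear, while the Fock-space approximating net for amenability survives the $\theta$-twist and the passage to $A^\delta$. The trace-scaling automorphism is precisely the leverage that makes this possible, and verifying that it does is the technical heart of the argument; granting it, Theorem~\ref{Thmint:Main} follows.
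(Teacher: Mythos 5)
You have correctly named the two ingredients announced in the abstract (the full Fock space of the regular representation and a trace-scaling automorphism), and the general shape of your plan --- a gauge action commuting with the $\Gamma$-action, a densely defined trace certifying stable finiteness --- is the right one. But the proposal defers precisely the steps that constitute the proof, and where it does commit to a mechanism it commits to one that does not work. First, ``matching the trace-scaling exponent to the Fock-space combinatorics'' over the full module $\ell^2(\Gamma)$ is impossible for infinite $\Gamma$: there is no automorphism scaling a trace by $1/|\Gamma|$. The paper first reduces to finitely generated groups via the Higman--Neumann--Neumann embedding \cite{HNN}, fixes a finite generating set $S$, and uses an automorphism $\rho$ with $\tau_A\circ\rho=\tfrac{1}{|S|}\tau_A$; the creation-type isometries $v_g$ are then used only to build a single injective, $\Gamma$-equivariant, trace-preserving endomorphism $\sigma$ of the building block $B=c_0(\Gamma)\otimes\mathbb{K}(\mathfrak{H})\otimes A$ that spreads supports over right translates of $S$ (orthogonality of the ranges of the $v_g$ is exactly what makes $\sigma$ multiplicative despite the overlaps $gS\cap hS\neq\emptyset$). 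Second, amenability in the paper does \emph{not} come from a Fock-space approximating net: it comes for free from the $c_0(\Gamma)$ tensor factor carrying the translation action, passes to the inductive limit, and is then lifted to the $\mathbb{Z}$-crossed product through the gauge circle action by Theorem 5.1 of \cite{OS}. Your net built from isometries $t_g$ with orthogonal ranges ``exactly as in the purely infinite constructions'' is in direct tension with the stable finiteness you are trying to arrange, and you do not resolve that tension; you also run the $\mathbb{T}$-averaging step in the opposite (and harder) direction, deducing amenability of the core from amenability of the ambient Pimsner algebra rather than the reverse.

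Third, the endgame for simplicity does not produce the required object. The reduced crossed product $A^\delta\rtimes_{\mathrm r}\Gamma$ no longer carries the $\Gamma$-action, so it cannot witness the theorem, and an unspecified ``subquotient or corner singled out by the trace'' is not shown to be $\Gamma$-invariant, nor to retain amenability or nuclearity. The paper's resolution is structurally different: form the inductive limit $\mathfrak{B}$ along $\sigma$, take the crossed product $C=\mathfrak{B}\rtimes_{\mathrm r,\alpha}\mathbb{Z}$ by the shift automorphism (which commutes with the $\Gamma$-action and preserves the trace, so $C$ is stably finite), prove that $\mathfrak{B}$ has no proper $\alpha$-invariant ideal using $\supp(\sigma(b))=\supp(b)S$ together with $\bigcup_n S^n=\Gamma$, and then show every nonzero ideal of $C$ meets $\mathfrak{B}$ by a Kishimoto-type argument \cite{Kis} --- outerness of the nonzero powers of $\rho$ being automatic because they scale the trace. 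None of these three steps appears in your sketch, and they are the proof.
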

To show the stable finiteness of the resulting (simple, non-unital) \Cs-algebra,
we construct a \emph{proper tracial weight} on it.
Here we recall that a tracial weight on a \Cs-algebra
is said to be \emph{proper}
if it is densely defined, lower semi-continuous, and nonzero.
We refer the reader to Chapter 5 of \cite{Pedbook}
for basic facts on tracial weights on \Cs-algebras.

On this opportunity, we also record the following generalization
of Corollary 6.4 in \cite{OS}, originally shown for the free groups,
to exact locally compact groups satisfying the strong Baum--Connes conjecture (\cite{MN1}, page 304). 
We recall that
groups with the Haagerup property satisfy the
strong Baum--Connes conjecture \cite{HK}.
\begin{Thmint}\label{Thmint:Oinfty}
Let $G$ be an exact locally compact second countable group.
Then $G$ admits an amenable action $\alpha$ on the Cuntz algebra $\mathcal{O}_\infty$.
Moreover, when $G$ satisfies the strong Baum--Connes conjecture,
one can arrange $\alpha$ to be KK$^G$-equivalent to $\mathbb{C}$.
\end{Thmint}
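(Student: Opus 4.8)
The plan is to run the construction of \cite{OS}, Theorem~6.1 with the group--theoretic input supplied by exactness, following the proof of \cite{OS}, Corollary~6.4 for free groups, and then to upgrade the conclusion to a $\KK^G$--statement by substituting the strong Baum--Connes property for the special features of free groups. Recall first that a second countable locally compact group is exact exactly when it admits a topologically amenable action on a compact metrizable space; fix such an action $G\curvearrowright Y$, so that $C(Y)$ is a unital separable commutative nuclear $G$--$C^*$--algebra with amenable action. Since amenability is preserved under tensoring with an arbitrary action, the diagonal action on $C(Y)\otimes\mathcal{O}_\infty$ (with $G$ acting trivially on $\mathcal{O}_\infty$) is amenable, and this $G$--$C^*$--algebra is separable, unital, nuclear, properly infinite and $\mathcal{O}_\infty$--absorbing. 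Feeding it into \cite{OS}, Theorem~6.1 yields an amenable action $\alpha$ of $G$ on a separable simple nuclear purely infinite $\mathcal{O}_\infty$--absorbing $C^*$--algebra; since the construction keeps the $K$--theory of the seed under control through the inductive system it is built from, and since $\mathcal{O}_\infty$ is the unique unital Kirchberg algebra $A$ with $K_0(A)=\mathbb{Z}\cdot[1_A]$ and $K_1(A)=0$, one arranges, exactly as in \cite{OS}, Corollary~6.4, that the underlying algebra is $\mathcal{O}_\infty$. This already proves the first assertion, with exactness the only input.

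For the $\KK^G$--statement, one keeps track of the $\KK^G$--class throughout, and checks, as in \cite{OS}, Corollary~6.4, that $(\mathcal{O}_\infty,\alpha)$ lies in the Meyer--Nest bootstrap subcategory $\mathcal{B}_G$ of $\KK^G$ generated by the algebras $\mathrm{Ind}_K^G\mathbb{C}$ over the compact subgroups $K\le G$ --- this uses that $G\curvearrowright Y$ is amenable, so that $C(Y)\in\mathcal{B}_G$ by Tu's theorem, together with the fact that the construction preserves membership in $\mathcal{B}_G$. When $G$ satisfies the strong Baum--Connes conjecture, the Dirac morphism $\mathcal{P}_G\to\mathbb{C}$ is a $\KK^G$--equivalence (\cite{MN1}), so $\mathbb{C}\in\mathcal{B}_G$ too, and inside the localizing subcategory $\mathcal{B}_G$ a morphism is invertible as soon as it is detected as an isomorphism by the generating compact objects, i.e. as soon as it induces isomorphisms on $K^K_*(\mathrm{Res}_K(-))$ for every compact $K\le G$ (for torsion--free $G$, on the non-equivariant $K$--theory only). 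The $G$--equivariant unital inclusion $\mathbb{C}\hookrightarrow(\mathcal{O}_\infty,\alpha)$ --- legitimate since the unit is $\alpha$--fixed --- is the standard $\KK$--equivalence $\mathbb{C}\sim_{\KK}\mathcal{O}_\infty$ after forgetting $G$; arranging the construction so that it is moreover $\KK^K$--trivial over each compact subgroup $K$ (which one can do, e.g. by choosing $Y$ to be $K$--equivariantly contractible for every such $K$, as for a suitable compactification of a model of $\underline{E}G$), this inclusion is detected as an isomorphism over each compact $K$, hence is a $\KK^G$--equivalence, and $\alpha$ is $\KK^G$--equivalent to $\mathbb{C}$. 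For groups with the Haagerup property the required strong Baum--Connes property is \cite{HK}.

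The main difficulty lies in this last step: the compact $G$--space $Y$ must be chosen so that the action it induces on $\mathcal{O}_\infty$ is simultaneously amenable (making \cite{OS}, Theorem~6.1 applicable and placing $(\mathcal{O}_\infty,\alpha)$ in $\mathcal{B}_G$) and $\KK^K$--trivial over every compact subgroup $K$ (so that the equivariant unit is detected as a $\KK^G$--equivalence), and one must then thread this compatibility through the Meyer--Nest homological machinery; this is precisely the point at which the strong Baum--Connes hypothesis is used, and without it one controls only the non-equivariant $\KK$--class of $\alpha$. A more routine point is to verify that \cite{OS}, Theorem~6.1, applied to the $\mathcal{O}_\infty$--absorbing commutative--coefficient seed, outputs $\mathcal{O}_\infty$ on the nose and preserves the $\KK^G$--class --- bookkeeping with inductive limits and the Kirchberg--Phillips classification.
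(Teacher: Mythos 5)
Your overall skeleton matches the paper's: use exactness to get an amenable action on a compact metrizable space, feed a commutative seed into Theorem~6.1 of \cite{OS}, identify the output via Kirchberg--Phillips, and invoke Meyer--Nest weak equivalences plus strong Baum--Connes for the $\KK^G$-statement. But there is a genuine gap at exactly the point you yourself flag as ``the main difficulty,'' and you do not resolve it. Theorem~6.1 of \cite{OS} produces a Kirchberg algebra $A$ containing the seed with the inclusion a $\KK^G$-equivalence, so $A$ is (non-equivariantly) $\KK$-equivalent to your seed $C(Y)\otimes\mathcal{O}_\infty$, i.e.\ to $C(Y)$. For a general compact amenable $G$-space $Y$ (a boundary, say) the group $K_0(C(Y))$ is nowhere near $\mathbb{Z}\cdot[1]$, so the output is \emph{not} $\mathcal{O}_\infty$; your assertion that the construction ``keeps the $K$-theory under control'' and that identifying the output as $\mathcal{O}_\infty$ is ``routine bookkeeping'' is where the argument breaks. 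Likewise, for the second part you need the unital inclusion $\mathbb{C}\subset C(Y)$ to be a weak equivalence in $\KK^G$, and you only gesture at ``choosing $Y$ to be $K$-equivariantly contractible,'' without producing such a $Y$ that still carries an amenable action.

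The missing idea, which solves both problems at once, is to replace the compact space $X$ carrying the amenable action by the simplex $\Prob(X)\subset C(X)^\ast$ with the weak-$\ast$ topology and the induced action. Amenability passes to $\Prob(X)$ (Exercise~15.2.1 of \cite{BO}, or Proposition~3.5 of \cite{OS}). Since $\Prob(X)$ is a metrizable compact \emph{convex} set, it is contractible, so $C(\Prob(X))$ is homotopy equivalent to $\mathbb{C}$; hence the Kirchberg algebra output by Theorem~6.1 of \cite{OS} is $\KK$-equivalent to $\mathbb{C}$ with $[1]$ generating $K_0$, and Kirchberg--Phillips gives $\mathcal{O}_\infty$ on the nose --- no extra tensoring with $\mathcal{O}_\infty$ and no separate $K$-theory bookkeeping is needed. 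Moreover, every closed \emph{amenable} subgroup $L\leq G$ (in particular every compact subgroup) fixes a point of the compact convex set $\Prob(X)$, so $\Prob(X)$ is $L$-equivariantly contractible and the unital inclusion $\mathbb{C}\subset C(\Prob(X))$ is a weak equivalence in $\KK^G$ in the sense of \cite{MN1}. Under the strong Baum--Connes hypothesis this weak equivalence is a genuine $\KK^G$-equivalence (cf.\ \cite{MN2}), and composing with the $\KK^G$-equivalence $C(\Prob(X))\subset A$ from Theorem~6.1 gives the second assertion. Your Meyer--Nest localization discussion is in the right spirit, but without the passage to $\Prob(X)$ (or an equivalent device) neither half of the theorem is actually established.
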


\begin{Remint}
(1):  The author is informed from G\'abor Szab\'o
that he, together with James Gabe, has independently proved the statement for
exact groups with the Haagerup property by a different method:
Their proof is based on their classification result (the existence theorem) \cite{GS2} and Corollary 6.3 of \cite{OS}.

\vspace{5pt}
\noindent
(2): The exactness assumption in Theorem \ref{Thmint:Oinfty}
is necessary by Corollary 3.6 of \cite{OS}.
\end{Remint}

Throughout the article, all \Cs-algebras are assumed to be nonzero.

\vspace{10pt}
For basic facts on \Cs-algebras and discrete groups,
we refer the reader to Chapters 2 to 4 of the book \cite{BO}.
For basic facts on Pedersen ideals, we refer the reader to
Chapter 5.6 of the book \cite{Pedbook}.

\subsection*{Notations}
For a Hilbert space $\frak{H}$,
denote by $\IK(\frak{H})$ the \Cs-algebra of compact operators on $\frak{H}$.
When $\frak{H}=\ell^2(\mathbb{N})$, we simply denote by $\IK$.

For a \Cs-algebra $A$, denote by $\cM(A)$ its multiplier algebra.
For an isometric element $v$ in $\cM(A)$,
denote by $\ad_v\colon A \rightarrow A$
the endomorphism given by
\[\ad_v(a):=vav^\ast,\quad a\in A.\]

For a \Cs-algebra $A$, denote by ${\rm Ped}(A)$ the Pedersen ideal of $A$ (see \cite{Pedbook}, Theorem 5.6.1).

For \Cs-algebras $A_i$ and
a proper tracial weight $\tau_i$ on $A_i$, $i=1, 2$,
denote by $\tau_1 \otimes \tau_2$
the unique proper tracial weight on the minimal tensor product $A_1 \otimes A_2$
which coincides with $\tau_1|_{\Ped(A_1)} \odot \tau_2|_{\Ped(A_2)}$ on the algebraic tensor product $\Ped(A_1) \odot \Ped(A_2) \subset \Ped(A_1 \otimes A_2)$.
(Cf.~\cite{Pedbook}, Proposition 5.6.7.)

\section{Proofs}
\begin{proof}[Proof of Theorem \ref{Thmint:Main}] 
The Higman--Neumann--Neumann embedding theorem \cite{HNN} states 
that any countable group embeds into a finitely generated group.
It is clear that the restriction of a discrete group amenable action to a subgroup
is again amenable.
Therefore it suffices to show the statement
for finitely generated groups.

Let $\Gamma$ be a non-trivial finitely generated group.
Take a finite subset $S\subset \Gamma$
with $\bigcup_{n=1}^\infty S^n =\Gamma$, $|S|>1$.
Take a simple separable nuclear \Cs-algebra $A$,
an automorphism $\rho \colon A \rightarrow A$,
and a proper tracial weight $\tau_A$ on $A$
satisfying
\[\tau_A \circ \rho = \frac{1}{|S|} \tau_A.\]
(For instance,
define $A:= \IK \otimes (\bigotimes_{\mathbb{N}} \mathbb{M}_{|S|}(\mathbb{C}))$.
Take any isomorphisms
\[\rho_1 \colon \IK \rightarrow \IK \otimes \mathbb{M}_{|S|}(\mathbb{C}), \quad
\rho_2\colon \bigotimes_{\mathbb{N}} \mathbb{M}_{|S|}(\mathbb{C}) \rightarrow
\bigotimes_{n=2}^\infty \mathbb{M}_{|S|}(\mathbb{C}),\]
and
define an automorphism
$\rho \colon A \rightarrow A$
to be $\rho:= \rho_1 \otimes \rho_2$.
Then $\rho$ together with any proper tracial weight $\tau_A$ on $A$ satisfies the foregoing equality.)

Consider the Hilbert space $\frak{H}:= \bigoplus_{n \in \IN} \ell^2(\Gamma)^{\otimes n}$.
We define a unitary representation
$u \colon \Gamma \curvearrowright \frak{H}$
to be
\[u_g := \bigoplus_{n\in \IN} \lambda_g^{\otimes n}, \quad g\in \Gamma.\]
Here $\lambda \colon \Gamma \curvearrowright \ell^2(\Gamma)$
denotes the left regular representation of $\Gamma$.
Next, for each $g\in \Gamma$,
define a map $v_g \colon \frak{H} \rightarrow \frak{H}$
to be
\[v_g(\xi):= \delta_g \otimes \xi, \quad \xi \in \frak{H}.\]
This gives a family $(v_g)_{g\in \Gamma}$ of isometric operators with pairwise orthogonal ranges.
(Cf.~the full Fock space and the left creation operators.)

Put $B:= c_0(\Gamma) \otimes \IK(\frak{H}) \otimes A$.
Let $\beta \colon \Gamma \curvearrowright B$
be the action given by
\[\beta_g := {\rm L}_g \otimes \ad_{u_g \otimes 1_A}, \quad g\in \Gamma.\]
Here ${\rm L} \colon \Gamma \curvearrowright c_0(\Gamma)$
denotes the left translation action
and $1_A$ denotes the unit of $\cM(A)$.
For each $g\in \Gamma$,
define an endomorphism $\nu_g$ on $\IK(\frak{H}) \otimes A$
to be $\nu_g := \ad_{v_g} \otimes \rho$.
Since the isometric elements $(v_g)_{g\in \Gamma}$ have
pairwise orthogonal ranges, one can define an injective endomorphism $\sigma \colon B \rightarrow B$
by the formula
\[\sigma(b):= \sum_{g\in \Gamma} \chi_{g S} \otimes \nu_g(b_g), \quad b\in B.\]
Here we identify $B$ with
$c_0(\Gamma, \IK(\frak{H}) \otimes A)$ in the obvious way.

Let $\tau_\Gamma$ denote the proper tracial weight on $c_0(\Gamma)$
defined by the counting measure on $\Gamma$.
Fix a proper tracial weight $\tau_\mathfrak{H}$ on $\IK(\mathfrak{H})$.
On the \Cs-algebra $B$, consider the proper tracial weight
\[\tau_B := \tau_\Gamma \otimes \tau_\mathfrak{H} \otimes \tau_A.\]
Since $\IK(\mathfrak{H})\otimes A$ is simple, $\tau_{\mathfrak{H}} \otimes \tau_A$, hence $\tau_B$, is faithful.

Since 
\[(\tau_\mathfrak{H} \otimes \tau_A) \circ \nu_g = \frac{1}{|S|}\tau_\mathfrak{H} \otimes \tau_A\]
for all $g\in\Gamma$, one has
$\tau_B \circ \sigma = \tau_B$.

Next we define a \Cs-algebra $\mathfrak{B}$ to be the inductive limit
of the inductive system
\[
 \begin{CD}
 B @>{\sigma}>> B @>{\sigma}>> B @>{\sigma}>> \cdots 
 \end{CD}
\]
of \Cs-algebras.
For each $n\in \IN$, let \[\theta_n \colon B \rightarrow \mathfrak{B}\]
denote the canonical embedding from the $n$-th $B$ into $\mathfrak{B}$.
Since $\sigma$ preserves $\tau_B$,
one has a proper tracial weight $\tau_{\mathfrak{B}}$ on $\mathfrak{B}$
satisfying
\[\tau_{\mathfrak{B}} \circ \theta_n = \tau_B\]
for all $n\in \IN$.
Since $\tau_B$ is faithful, so is $\tau_{\mathfrak{B}}$.

We next show that $\sigma \colon B \rightarrow B$ is $\Gamma$-equivariant.
To see this, note that
\[u_g v_h = v_{gh} u_g\]
for all $g, h\in \Gamma$.
This implies
\[\ad_{u_g \otimes 1_A} \circ \nu_h= \nu_{gh} \circ \ad_{u_g\otimes 1_A}.\]
Then, for any $g\in \Gamma$ and any $b\in B$, we have
\begin{align*}
\beta_g(\sigma(b))&= \sum_{h\in \Gamma} \chi_{ghS} \otimes \ad_{u_{g} \otimes 1_A}(\nu_h(b_h))\\
&=\sum_{h\in \Gamma}\chi_{ghS} \otimes \nu_{gh}(\ad_{u_g \otimes 1_A}(b_h))\\
&=\sum_{h\in \Gamma}\chi_{hS} \otimes \nu_{h}(\ad_{u_g \otimes 1_A}(b_{g^{-1}h}))\\
&=\sigma(\beta_g(b)).
\end{align*}
Hence $\sigma$ is $\Gamma$-equivariant.
Therefore one has an action
$\gamma \colon \Gamma \curvearrowright \mathfrak{B}$
satisfying
\[\gamma_g(\theta_n(b))=\theta_n(\beta_g(b))\]
for all $g\in \Gamma$, $n\in \IN$, $b\in B$.

We next define an automorphism $\alpha \colon \mathfrak{B} \rightarrow \mathfrak{B}$
to be
\[\alpha(\theta_n(b)):= \theta_{n+1}(b), \quad n\in \IN, b\in B.\]
It is not hard to see that $\alpha$ is indeed a (well-defined) automorphism of $\mathfrak{B}$.
Clearly $\alpha$ is $\Gamma$-equivariant and preserves $\tau_{\mathfrak{B}}$.
Observe that for any $n\in \IN$, as $\theta_{n+1}\circ \sigma =\theta_n$, one has
$\alpha^{-1}\circ \theta_n = \theta_n \circ \sigma$.

Consider the reduced crossed product \Cs-algebra
\[C:=\mathfrak{B}\rca{\alpha} \mathbb{Z}.\]
Let $w\in \cM(C)$ denote the canonical implementing unitary element of $\alpha$.
Let
\[E \colon C=\mathfrak{B}\rca{\alpha} \mathbb{Z} \rightarrow \mathfrak{B}\]
denote the canonical conditional expectation.
Since $\alpha$ preserves $\tau_{\mathfrak{B}}$,
one has a faithful proper tracial weight $\tau_C:=\tau_{\mathfrak{B}} \circ E$ on $C$.
In particular $C$ is stably finite.
Since the two actions $\gamma$ and $\alpha$ commute,
one has an action
$\eta \colon \Gamma \curvearrowright C$
satisfying
\[\eta_g(x w^n)=\gamma_g(x)w^n\quad {\rm~for~} x\in \mathfrak{B}, n\in \IZ, g\in \Gamma.\]
We show that $(C, \eta)$ possesses the desired properties.

To see the amenability of $\eta$,
let us consider the gauge action $\zeta \colon \IT \curvearrowright C$.
(That is, the action given by
$\zeta_z(x w^n) := z^n x w^n$
for $z\in \IT:=\{s \in \IC: |s|=1\}$, $x\in \mathfrak{B}$, $n\in \IZ$.)
Note that $\zeta$ commutes with $\eta$
and that the fixed point algebra $C^{\IT}$
is equal to $\mathfrak{B}$.
Since $(B, \beta)$ is (strongly) amenable,
the action $\gamma$
is amenable.
Now Theorem 5.1 of \cite{OS} implies the amenability of $\eta$.
(An alternative proof: One can directly show
the nuclearity of $C \rca{\eta} \Gamma \cong(\mathfrak{B}\rca{\gamma} \Gamma)\rc \IZ$.
Then Theorem 4.5 of \cite{AD} implies
the amenability of $\eta$.)

We now prove that the \Cs-algebra
$C$ has the desired properties.
In fact all properties other than the simplicity of $C$ are clear from the construction.
To show the simplicity of $C$,
we first show
that $\mathfrak{B}$ has
no proper (two-sided, closed) ideal $I$
which is $\alpha$-invariant, that is, $\alpha(I)=I$.
Let $I \subset \mathfrak{B}$ be a nonzero $\alpha$-invariant ideal.
Choose $n\in \IN$ satisfying
$I \cap \theta_n(B)\neq \{0\}$.
Observe that any ideal $J$ of $B$ is of the form
$c_0(X) \otimes \IK(\mathfrak{H}) \otimes A$
for some subset $X \subset \Gamma$.
Here we identify $c_0(X)$ with an ideal of $c_0(\Gamma)$ in the obvious way.
Choose a (non-empty) subset $X\subset \Gamma$ satisfying
\[I \cap \theta_n(B) = \theta_n(c_0(X)\otimes \IK(\mathfrak{H}) \otimes A).\]
We claim that $X=\Gamma$.
Observe that for any $b\in B$,
one has
\[\supp(\sigma(b))= \supp(b)S\]
by the definition of $\sigma$.
Since $\alpha(I)=I$,
we obtain
\[\theta_n(\sigma(c_0(X)\otimes \IK(\mathfrak{H}) \otimes A))
=\alpha^{-1}(I \cap \theta_n(B))
= \alpha^{-1}(I)\cap \theta_n(\sigma(B)) \subset I \cap \theta_n(B).\]
This proves $X S \subset X$.
By the choice of $S$,
we have $X=\Gamma$.
This yields $I=\mathfrak{B}$.

Now let $I \subset C$ be a nonzero ideal.
As $\mathfrak{B}$ has no proper $\alpha$-invariant ideal,
it suffices to show that $I \cap \mathfrak{B} \neq \{0\}$.
Since $E$ is a faithful conditional expectation, the image $J:=E(I)$ is
a nonzero algebraic ideal of $\mathfrak{B}$ with $\alpha(J)=E(w I w^\ast) =J$.
Hence $J$ is norm dense in $\mathfrak{B}$.
By Theorem 5.6.1 in \cite{Pedbook}, we have $\Ped(\mathfrak{B}) \subset J$. 
Since $\theta_1(\sigma({\rm Ped}(B)))=\Ped(\theta_1(\sigma(B))) \subset \Ped(\mathfrak{B})$,
we conclude $\theta_1(\sigma(\Ped(B))) \subset J$. 
Then, notice that the algebraic tensor product $c_c(\Gamma)\odot \IF(\mathfrak{H}) \odot \Ped(A)$ is contained in $\Ped(B)$, where $\IF(\mathfrak{H})=\Ped(\IK(\mathfrak{H}))$ denotes the $\ast$-algebra
consisting of all bounded operators of finite rank on $\mathfrak{H}$.
Hence one can find a self-adjoint element $c\in I$
with
\[E(c)=\theta_1(\sigma(r \otimes a))\]
for some positive elements $r\in c_0(\Gamma)\otimes \IK(\mathfrak{H})$
and $a\in A$ with $\|r \|=\|a\|=1$.
Note that for any $n\in \IN$, by the definition of $\sigma$,
there is a positive element $r_n \in c_0(\Gamma)\otimes \IK(\mathfrak{H})$ of norm one
satisfying 
$\sigma^n(r\otimes d) = r_n \otimes \rho^n(d)$ for all $d\in A$.
We next choose a self-adjoint element $c_0\in C$
of the form
\[c_0=\sum_{k=-N}^N \theta_{M}(b_k) w^k \quad {\rm~where~} N, M \in \IN,\quad b_k\in B, k=-N, -N+1, \ldots, N\]
satisfying
\[\theta_{M}(b_0)= \theta_1(\sigma(r \otimes a))~ ({\rm that~is}, E(c)=E(c_0)), \quad \|c-c_0\|< \frac{1}{2}.\]
Observe that 
\[\theta_1(\sigma(r \otimes a))
=\theta_{M}(\sigma^{M}(r\otimes a))= \theta_{M}(r_{M} \otimes \rho^{M}(a))\]
hence
$b_0= r_{M} \otimes \rho^{M}(a)$.

Note that the nonzero powers of $\rho$ are outer,
because they do not preserve the proper tracial weight $\tau_A$.
Therefore, by Kishimoto's theorem \cite{Kis} (see Lemma 3.2 therein),
one can find a sequence $(x_n)_{n=1}^\infty$ in $A$
satisfying
\begin{enumerate}
\item $\|x_n\|=1$ for all $n\in \IN$,
\item $\lim_{n\rightarrow \infty} \|x_n d \rho^k(x_n^\ast)\| = 0$
for all $d \in A$ and $k=1, \ldots, N$,
\item $\lim_{n\rightarrow \infty} \|x_n \rho^{M}(a) x_n^\ast\|=1$.
\end{enumerate}
Put $y_n:=\theta_{M}(r_{M} \otimes x_n)$ for $n\in \IN$.
Then for any $k=1, 2, \ldots, N$, by conditions (1) and (2), one has
\begin{align*}\lim_{n\rightarrow \infty}\|y_n \theta_{M}(b_{-k}) w^{-k} y_n^\ast\|
&= \lim_{n\rightarrow \infty}\|(r_{M} \otimes x_n) b_{-k} (r_{M+k} \otimes \rho^k(x_n^\ast))\|\\
&=0.\end{align*}
Since $c_0$ is self-adjoint,
this proves
\[\lim_{n\rightarrow \infty} y_n(c_0 - E(c_0)) y_n^\ast =0.\]
Condition (3) yields
\[\lim_{n\rightarrow \infty}\|y_n \theta_{M}(b_0) y_n^\ast\|
= \lim_{n\rightarrow \infty}\|r^3_M \otimes x_n\rho^M(a) x_n^\ast \|
=1.\]
Therefore for a sufficiently large $n\in \IN$,
we have
\[{\rm dist}(y_n E(c)y_n^\ast, I) \leq \| y_n(c-E(c))y_n^\ast\| < \frac{1}{2}<\|y_n E(c) y_n ^\ast\|.\]
This shows that the canonical map
$\mathfrak{B} \rightarrow C/I$
is not isometric.
Thus $\mathfrak{B} \cap I \neq \{0\}$.
\end{proof}
\begin{Rem}
Here we record a few observations on the foregoing construction.
Keep the same notations as in the proof of Theorem \ref{Thmint:Main}.

\vspace{5pt}
\noindent
(1): When $A$ satisfies the universal coefficient theorem of Rosenberg--Schochet \cite{RS},
so does the resulting \Cs-algebra $C$.

\vspace{5pt}
\noindent
(2): The tracial weight $\tau_C$ on $C$ is $\Gamma$-invariant.
Hence the reduced crossed product \Cs-algebra $C \rca{\eta} \Gamma$ has a faithful tracial weight.
In particular $C \rca{\eta} \Gamma$ is stably finite.
We also note that $C \rca{\eta} \Gamma$ is simple.
To see this, observe that $B \rca{\beta} \Gamma$ is isomorphic to $\IK(\ell^2(\Gamma)) \otimes  \IK(\mathfrak{H}) \otimes A$.
Since $\mathfrak{B} \rca{\gamma} \Gamma$ is isomorphism to an inductive limit of copies of $B \rca{\beta} \Gamma$,
we conclude the simplicity of $\mathfrak{B} \rca{\gamma} \Gamma$.
Now the same proof as the simplicity of $C$ proves
the simplicity of $C  \rca{\eta} \Gamma \cong (\mathfrak{B} \rca{\gamma} \Gamma) \rc \IZ$.

\vspace{5pt}
\noindent
(3): When $\Gamma$ is non-amenable, the \Cs-algebra $C$ has proper tracial weights other than the
scalar multiples of $\tau_C$.
This follows from the following general observation.
By \cite{AD79},
non-amenable groups cannot act amenably on a factor (in the von Neumann algebra sense).
Thus, when a non-amenable group has an amenable action on a \Cs-algebra,
its space of all proper tracial weights cannot be a ray.

Alternatively, one can give different proper tracial weights on $C$ as follows.
Let $\nu$ denote the averaging measure on $S^{-1} \subset \Gamma$.
Then we replace the counting measure in the definition of $\tau_B$
 by another (nonzero, $\sigma$-finite) right $\nu$-harmonic measure $\mu$ on 
$\Gamma$ (that is, $\mu \ast \nu= \mu$).
This gives a $\sigma$-invariant proper tracial weight $\tau_{B, \mu}$ on $B$.
Then, by the same way as in the original case,
$\tau_{B, \mu}$ gives an $\alpha$-invariant proper tracial weight on $\mathfrak{B}$.
As a result we obtain a proper tracial weight on $C$.
(The new proper tracial weight is no longer $\Gamma$-invariant,
unlike the original $\tau_C$.)
Note that $\Gamma$ has many right $\nu$-harmonic measures.
In fact, the space ${\rm H}^{\infty}_{\mathrm r}(\Gamma, \nu)$ of bounded right $\nu$-harmonic functions 
is already huge.
To see this, define $P_\nu \colon \ell^\infty(\Gamma) \rightarrow \ell^\infty(\Gamma)$ to be $P_\nu(f):=f\ast \nu$, $f\in \ell^\infty(\Gamma)$.
Then any cluster point of the sequence
\[\Phi_n:= \frac{1}{n}\sum_{k=1}^n P_\nu ^k \colon \ell^\infty(\Gamma) \rightarrow \ell^\infty(\Gamma),\quad n\in \IN,\]
in the point-ultraweak topology
gives a positive projection $\Phi \colon \ell^\infty(\Gamma) \rightarrow {\rm H}^\infty_{\mathrm r}(\Gamma, \nu)$.
Moreover $\Phi$ commutes with the left translation $\Gamma$-action.
Since $\Gamma$ is non-amenable,
the Choi--Effros product makes ${\rm H}^\infty_{\mathrm r}(\Gamma, \nu)$
an infinite-dimensional injective \Cs-algebra.

\vspace{5pt}
\noindent
(4): Consider the special case that one has a (continuous) flow $\Theta \colon \IR \acts A$ 
which commutes with $\rho$ and
satisfies $\tau_A \circ \Theta_t  =\kappa^t \tau_A$, $t\in \IR$, for some constant
$\kappa>1$.
(See e.g., \cite{KK}.)
Then clearly
$\Xi_t:=\id_{c_0(\Gamma)} \otimes \id_{\IK(\mathfrak{H})} \otimes \Theta_t$ and $\sigma$ commute for all $t\in \IR$.
Hence one has a flow $\Psi \colon \IR \acts \mathfrak{B}$
satisfying $\Psi_t \circ \theta_n = \theta_n \circ \Xi_t$ for all $n\in \IN$ and $t\in \IR$.
The flow $\Psi$ and the automorphism $\alpha$ commute
hence $\Psi$ induces a flow $\Omega$ on $C$.
By the definitions of $\Psi$ and $\tau_C$, one has
$\tau_C \circ \Omega_t = \kappa^t \tau_C$
for all $t\in \IR$.
In particular, the resulting \Cs-algebra $C$ is stably projectionless.
\end{Rem}

\begin{proof}[Proof of Theorem \ref{Thmint:Oinfty}]
Take an amenable action
$\beta\colon G \curvearrowright X$ on a compact metrizable space \cite{Oz}, \cite{BCL}.
Let $\gamma \colon G \curvearrowright {\rm Prob}(X)$
denote the action induced from $\beta$.
Here we equip ${\rm Prob}(X) \subset C(X)^\ast$ with the weak-$\ast$ topology.
Then it is known that $\gamma$ is again amenable (see e.g., Exercise 15.2.1 of \cite{BO} or Proposition 3.5 of \cite{OS}).
Note that ${\rm Prob}(X)$ is a metrizable compact convex subset of $C(X)^\ast$
in the weak-$\ast$ topology.
Therefore any closed amenable subgroup $L$ of $G$
has a fixed point in ${\rm Prob}(X)$.
Hence ${\rm Prob}(X)$ is $L$-equivariantly contractible.
In particular the unital inclusion $\IC \subset C({\rm Prob}(X))$
is a weak equivalence in KK$^G$ in the sense of \cite{MN1}.

We now apply Theorem 6.1 in \cite{OS}
to $(C({\rm Prob}(X)), \gamma)$.
This gives an ambient $G$-\Cs-algebra
$(A, \alpha)$ of $(C({\rm Prob}(X)), \gamma)$ satisfying
the following conditions:
\begin{enumerate}
\item
$A$ is a Kirchberg algebra,
\item $(A, \alpha)$ is amenable,
\item
the inclusion
$C({\rm Prob}(X)) \subset A$ is unital and KK$^G$-equivalent.
\end{enumerate}
From conditions (1) and (3),
one has $A \cong \mathcal{O}_\infty$ by the Kirchberg--Phillips classification theorem \cite{Kir}, \cite{Phi}.

When $G$ satisfies the strong Baum-Connes conjecture (in the sense of \cite{MN2}, page 304),
the unital inclusion 
$\mathbb{C} \subset C({\rm Prob}(X))$
and hence also the unital inclusion $\IC \subset A$
is KK$^G$-equivalent (cf.~ Proposition 4.3 and Corollary 7.3 in \cite{MN1}).
\end{proof}
\subsection*{Acknowledgements}
The author is grateful to Professor Yasuyuki Kawahigashi for giving me an opportunity
to give an intensive lecture at the University of Tokyo.
This motivated me to reconsider the problem solved in this paper (Theorem \ref{Thmint:Main}).

He is grateful to Professor Masaki Izumi for a stimulating comment
on Corollary 6.4 of \cite{OS},
which leads to obtain Theorem \ref{Thmint:Oinfty}.
He is also grateful to Professor G\'abor Szab\'o
for helpful comments on a draft of this article.

Finally I would like to thank the referee for several comments and suggestions,
which improve the readability of this article.

This work was supported by JSPS KAKENHI (Grant-in-Aid for Early-Career Scientists)
Grant Numbers JP19K14550, JP22K13924.


\begin{thebibliography}{99}
\bibitem{AD79} C.~ Anantharaman-Delaroche, {\it Action moyennable d'un groupe localement compact sur une alg\`{e}bre de von Neumann.} Math.~ Scand.~ {\bf 45} (1979), 289--304.
\bibitem{AD}C.~Anantharaman-Delaroche, {\it Syst\`{e}mes dynamiques non commutatifs et moyennabilit\'{e}}, Math.~ Ann.~ {\bf 279} (1987), 297--315.
\bibitem{BC}A.~ Bearden, J.~ Crann, {\it Amenable dynamical systems over locally compact groups.} Ergodic Theory Dynam. Systems {\bf 42} (2022), 2468--2508
\bibitem{BCL} J.~Brodzki, C.~Cave, K.~Li, {\it Exactness of locally compact groups.} Adv.~Math.~{\bf 312} (2017), 209--233.
\bibitem{BO} N.~ P.~ Brown, N.~Ozawa, {\it \Cs-algebras and finite-dimensional approximations.} Graduate Studies in Mathematics {\bf 88}. American Mathematical Society, Providence, RI, 2008.

\bibitem{BEW2} A.~Buss, S.~ Echterhoff, R.~Willett,
{\it Amenability and weak containment for actions of locally compact groups on \Cs-algebras.} To appear in Mem.~Amer.~Math.~Soc., arXiv:2003.03469v7.
\bibitem{GS1} J.~ Gabe, G.~ Szab\'o, {\it The stable uniqueness theorem for equivariant Kasparov theory.} Preprint, arxiv:2202.09809v2.
\bibitem{GS2} J.~ Gabe, G.~ Szab\'o, {\it The dynamical Kirchberg--Phillips theorem.} Preprint, arXiv:2205.04933v2.
\bibitem{HNN}G.~ Higman, B.~ H.~ Neumann, H.~ Neumann,
{\it Embedding theorem for groups.} J.~ London Math.~ Soc., {\bf 24} (1949),
247--254.

\bibitem{HK}N.~ Higson, G.~ Kasparov. {\it $E$-theory and $KK$-theory for groups which act properly and isometrically on Hilbert space.} Invent.~ Math., {\bf 144}(1) (2001), 23--74.
\bibitem{Kir} E.~ Kirchberg, {\it The classification of purely infinite \Cs-algebras using Kasparov's theory.} Preprint.
\bibitem{Kis}A.~Kishimoto, {\it Outer automorphisms and reduced crossed products of simple \Cs-algebras.} Comm.~ Math.~ Phys.~ {\bf 81} (1981), no. 3, 429--435.
\bibitem{KK} A.~ Kishimoto, A.~ Kumjian, {\it Simple stably projectionless \Cs-algebras arising as crossed products.} Canad.~ J.~ Math.~ {\bf 48} (1996), no. 5, 980--996.


\bibitem{MN1}R.~Meyer, R.~Nest, {The Baum-Connes Conjecture via Localisation of Categories.} Topology {\bf 45} (2006),
no.~ 2, 209--259. 

\bibitem{MN2} R.~ Meyer, R.~ Nest, {\it An analogue of the Baum-Connes isomorphism for coactions of compact groups.}
Math.~ Scand.~ {\bf 100} (2007), no. 2, 301--316.


\bibitem{Oz}N.~ Ozawa, {\it Amenable actions and exactness for discrete groups.}
C.~ R.~ Acad.~ Sci.~ Paris Ser.~ I Math., {\bf 330} (8) (2000), 691--695.
\bibitem{OS}N.~ Ozawa, Y.~ Suzuki, {\it On characterizations of amenable \Cs-dynamical systems and new
examples.} Selecta Math.~ (N.S.), {\bf 27} (2021), Article number: 92 (29 pages).
\bibitem{Pedbook}G.~Pedersen, {\it \Cs-algebras and their automorphism groups.} 
Pure and Applied Mathematics, Academic Press, London, 2018. Second edition.

\bibitem{Phi}N.~ C.~ Phillips, {\it A classification theorem for nuclear purely infinite simple \Cs-algebras.} Doc.~ Math.~ {\bf 5} (2000), 49--114.
\bibitem{RS}J.~ Rosenberg, C.~ Schochet, {\it The K\"{u}nneth theorem and the universal coefficient theorem for Kasparov's generalized K-functor.} Duke Math.~ J.~ {\bf 55} (1987), no.~ 2, 431--474.
\bibitem{Suzeq}Y.~Suzuki, {\it Simple equivariant \Cs-algebras whose full and reduced crossed products coincide.}
J.~ Noncommut.~ Geom. {\bf 13} (2019), 1577--1585.
\bibitem{Suz21}Y.~Suzuki, {\it Equivariant $\mathcal{O}_2$-absorption theorem for exact groups.} Compos. Math. \textbf{157} (2021), 1492--1506.
\bibitem{Win} W.~ Winter, {\it Structure of nuclear \Cs-algebras: From quasidiagonality to classification, and back again.} Proc.~ Int.~ Congr.~ Math.~ (2017), 1797--1820. 
\end{thebibliography}
\end{document}